\theoremstyle{plain}
\newtheorem{theorem}{Theorem}[section]
\newtheorem{lemma}[theorem]{Lemma}
\theoremstyle{definition}
\newcommand{\R}{\mathbb{R}}
\newcommand{\C}{\mathbb{C}}
\newcommand{\ga}{\gamma}
\newcommand{\de}{\delta}
\newcommand{\be}{\beta}
\newcommand{\si}{\sigma}
\newcommand{\al}{\alpha}
\newcommand{\Aa}{A^\alpha}
\newcommand{\Ba}{B^\alpha}
\newcommand{\Xa}{X^\alpha}
\newcommand{\Ab}{A^\beta}
\newcommand{\Bb}{B^\beta}
\newcommand{\Xb}{X^\beta}
\newcommand{\Ac}{A^\gamma}
\newcommand{\Bc}{B^\gamma}
\newcommand{\Xc}{X^\gamma}
\DeclareMathOperator{\nr}{nilrad}
\DeclareMathOperator{\Ann}{Ann}
\title{Solvable Leibniz Algebras with Triangular Nilradical}
\author[Bosko-Dunbar]{Lindsey Bosko-Dunbar}
\address{Department of Mathematics, Spring Hill College\\
Mobile, AL 36608}
\email{lboskodunbar@shc.edu}
\author[Burke]{Matthew Burke}
\address{Department of Mathematics, Spring Hill College\\
Mobile, AL 36608}
\email{mjburke@email.shc.edu}
\author[Dunbar]{Jonathan D. Dunbar}
\address{Department of Mathematics, Spring Hill College\\
Mobile, AL 36608}
\email{jdunbar@shc.edu}
\author[Hird]{J.T. Hird}
\address{Department of Mathematics, West Virginia University, Institute of Technology\\
Montgomery, WV 25136}
\email{John.Hird@mail.wvu.edu}
\author[Stagg Rovira]{Kristen Stagg Rovira}
\address{Department of Mathematics, The University of Texas at Tyler\\
Tyler, TX 75799}
\email{kstagg@uttyler.edu}
\begin{document}

\subjclass[2010]{17D99}
\keywords{Leibniz, triangular, nilradical, classification, Lie}

\doublespacing
\maketitle

\begin{abstract}
A classification exists for Lie algebras whose nilradical is the triangular Lie algebra $T(n)$.  We extend this result to a classification of all solvable Leibniz algebras with nilradical $T(n)$.  As an example we show the complete classification of all Leibniz algebras whose nilradical is $T(4)$.
\end{abstract}

\section{Introduction}\label{intro}


Leibniz algebras were defined by Loday in 1993 \cite{loday, loday2}.  In recent years it has been a common theme to extend various results from Lie algebras to Leibniz algebras \cite{ao, ayupov, omirov}.  Several authors have proven results on nilpotency and related concepts which can be used to help extend properties of Lie algebras to Leibniz algebras.
Specifically, variations of Engel's theorem for Leibniz algebras have been proven by different authors \cite{barnesengel, jacobsonleib} and Barnes has proven Levi's theorem for Leibniz algebras \cite{barneslevi}.  Additionally, Barnes has shown that left-multiplication by any minimal ideal of a Leibniz algebra is either zero or anticommutative \cite{barnesleib}.

In an effort to classify Lie algebras, many authors place various restrictions on the nilradical \cite{cs, nw, rw, wld}.  In \cite{tw}, Tremblay and Winternitz study solvable Lie algebras with triangular nilradical.  It is the goal of this paper to extend these results to the Leibniz setting.

Recent work has been done on classification of certain classes of Leibniz algebras \cite{aor, chelsie-allison, heisen, clok, clok2}.  In \cite{heisen}, a subset of the authors of this work found a complete classification of all Leibniz algebras whose nilradical is Heisenberg.  In particular, this includes a classification of all Leibniz algebras whose nilradical is the triangular Lie algebra $T(3)$, since $T(3)$ is the three-dimensional Heisenberg algebra.  For this reason our primary example will be Leibniz algebras whose nilradical is the triangular algebra $T(4)$.

\section{Preliminaries}

A Leibniz algebra, $L$, is a vector space over a field (which we will take to be $\C$ or $\R$) with a bilinear operation (which we will call multiplication) defined by $[x,y]$ which satisfies the Leibniz identity
\begin{equation}\label{Jacobi}
[x,[y,z]] = [[x,y],z] + [y,[x,z]]
\end{equation}
for all $x,y,z \in L$.  In other words $L_x$, left-multiplication by $x$, is a derivation.  Some authors choose to impose this property on $R_x$, right-multiplication by $x$, instead.  Such an algebra is called a ``right'' Leibniz algebra, but we will consider only ``left'' Leibniz algebras (which satisfy \eqref{Jacobi}).  $L$ is a Lie algebra if additionally $[x,y]=-[y,x]$.

The derived series of a Leibniz (Lie) algebra $L$ is defined by $L^{(1)}=[L,L]$, $L^{(n+1)}=[L^{(n)},L^{(n)}]$ for $n\ge 1$.  $L$ is called solvable if $L^{(n)}=0$ for some $n$. The lower-central series of $L$ is defined by $L^2 = [L,L]$, $L^{n+1}=[L,L^n]$ for $n>1$. $L$ is called nilpotent if $L^n=0$ for some $n$.  It should be noted that if $L$ is nilpotent, then $L$ must be solvable.

The nilradical of $L$ is defined to be the (unique) maximal nilpotent ideal of $L$, denoted by $\nr(L)$.  It is a classical result that if $L$ is solvable, then $L^2 = [L,L] \subseteq \nr(L)$.  From \cite{mubar}, we have that
\begin{equation}\label{dimension}
\dim (\nr(L)) \geq \frac{1}{2} \dim (L).
\end{equation}

The triangular algebra $T(n)$ is the $\frac{1}{2}n(n-1)$-dimensional Lie algebra whose basis is the set of strictly upper-triangular matrices, $\{N_{ik} \vert 1 \leq i < k \leq n \}$ defined by multiplications
\begin{equation}\label{tri}
[N_{ik},N_{ab}]=\de_{ka}N_{ib} - \de_{bi}N_{ak}.
\end{equation}

The left-annihilator of a Leibniz algebra $L$ is the ideal $\Ann_\ell(L) = \left\{x\in L\mid [x,y]=0\ \forall y\in L\right\}$. Note that the elements $[x,x]$ and $[x,y] + [y,x]$ are in $\Ann_\ell(L)$, for all $x,y\in L$, because of \eqref{Jacobi}.

An element $x$ in a Leibniz algebra $L$ is nilpotent if both $(L_x)^n = (R_x)^n = 0$ for some $n$.  In other words, for all $y$ in $L$
\begin{equation*}
[x,\cdots[x,[x,y]]] = 0 = [[[y,x],x]\cdots,x].
\end{equation*}

A set of matrices $\{\Xa\}$ is called linearly nilindependent if no non-zero linear combination of them is nilpotent.  In other words, if
\begin{equation*}
X = \displaystyle\sum_{\al=1}^f c_\al \Xa,
\end{equation*}
then $X^n=0$ implies that $c_\al=0$ for all $\al$.  A set of elements of a Leibniz algebra $L$ is called linearly nilindependent if no non-zero linear combination of them is a nilpotent element of $L$.

\section{Classification}

Let $T(n)$ be the $\frac{1}{2}n(n-1)$-dimensional triangular (Lie) algebra over the field $F$ ($\C$ or $\R$) with basis $\{N_{ik} \vert 1 \leq i < k \leq n \}$ and products given by \eqref{tri}.  We will extend $T(n)$ to a solvable Leibniz algebra $L$ of dimension $\frac{1}{2}n(n-1) + f$ by appending linearly nilindependent elements $\{X^1, \ldots, X^f\}$.  In doing so, we will construct an indecomposable Leibniz algebra whose nilradical is $T(n)$.

We construct the vector $N = (N_{12} N_{23} \cdots N_{(n-1)n} N_{13} \cdots N_{(n-2)n} \cdots N_{1n})^T$ whose components are the basis elements of the nilradical ordered along consecutive off-diagonals ($N_{i(i+1)}$ in order, then $N_{i(i+2)}$ in order, \ldots).  Then since $[L,L] \subseteq \nr(L)$, the brackets of $L$ are given by \eqref{tri} and
\begin{align*}
[\Xa,N_{ik}] &= \Aa_{ik,pq} N_{pq}\\
[N_{ik},\Xa] &= \Ba_{ik,pq} N_{pq}\\
[\Xa,\Xb] &= \si^{\al\be}_{pq} N_{pq}.
\end{align*}
using Einstein summation notation on repeated indices (from here onward), where $1\leq \al, \be \leq f$, $\Aa_{ik,pq}, \si^{\al \be}_{pq} \in F$.  Note that $\Aa \in F^{r \times r}$, $N \in T(n)^{r \times 1}$ where $r = \frac{1}{2}n(n-1)$.

To classify Leibniz algebras $L(n,f)$ we must classify the matrices $\Aa$ and $\Ba$ and the constants $\si^{\al \be}_{pq}$.  The Jacobi identities for the triples $\{\Xa, N_{ik}, N_{ab}\}$, $\{N_{ik}, N_{ab}, \Xa\}$, $\{N_{ik}, \Xa, N_{ab}\}$ with $1 \leq \al \leq f$, $1 \leq i < k \leq n$, $1 \leq a < b \leq n$ give us respectively
%
%
\begin{align}
\tag{4a}\label{2.14}\de_{ka} \Aa_{ib,pq} N_{pq} - \de_{bi} \Aa_{ak,pq} N_{pq} + \Aa_{ik,bq} N_{aq} - \Aa_{ik,pa} N_{pb} - \Aa_{ab,kq} N_{iq} + \Aa_{ab,pi} N_{pk} &= 0\\
\tag{4b}\label{un2.14}\de_{ka} \Ba_{ib,pq} N_{pq} - \de_{bi} \Ba_{ak,pq} N_{pq} + \Ba_{ik,bq} N_{aq} - \Ba_{ik,pa} N_{pb} - \Ba_{ab,kq} N_{iq} + \Ba_{ab,pi} N_{pk} &= 0\\
\tag{4c}\label{2.14twist}\de_{ka} \Aa_{ib,pq} N_{pq} - \de_{bi} \Aa_{ak,pq} N_{pq} + \Aa_{ik,bq} N_{aq} - \Aa_{ik,pa} N_{pb} + \Ba_{ab,kq} N_{iq} - \Ba_{ab,pi} N_{pk} &= 0.
\end{align}
\addtocounter{equation}{1}

As a consequence of \eqref{2.14} and \eqref{2.14twist}, we also have that 
$$\Aa_{ab,pi} N_{pk} - \Aa_{ab,kq} N_{iq} = - (\Ba_{ab,pi} N_{pk} - \Ba_{ab,kq} N_{iq}).$$
Thus $\Aa_{ab,pi} = - \Ba_{ab,pi}$ if $p<i$ and $\Aa_{ab,kq} = - \Ba_{ab,kq}$ if $k<q$.  Therefore
\begin{equation}\label{2.14cor}
\Aa_{ab,ik} = - \Ba_{ab,ik} \quad\forall ab,ik \text{ except } ik=1n.
\end{equation}

Similarly the Jacobi identities for the triples $\{\Xa, \Xb, N_{ab}\}$, $\{\Xa, N_{ik}, \Xb\}$, $\{N_{ik}, \Xa, \Xb\}$ with $1 \leq \al, \be \leq f$ and $1 \leq i < k \leq n$ give us respectively
%
%
\begin{align}
\tag{6a}\label{2.15}[\Aa, \Ab]_{ik,pq} N_{pq} =& \phantom{-(}\si^{\al \be}_{kq} N_{iq} - \si^{\al \be}_{pi} N_{pk}\\
\tag{6b}\label{2.15twist}[\Aa, \Bb]_{ik,pq} N_{pq} =& - (\si^{\al \be}_{kq} N_{iq} - \si^{\al \be}_{pi} N_{pk})\\
\tag{6c}\label{un2.15}(\Bb\Aa+\Ba\Bb)_{ik,pq} N_{pq} =& \phantom{-(}\si^{\al \be}_{kq} N_{iq} - \si^{\al \be}_{pi} N_{pk}.
\end{align}
\addtocounter{equation}{1}
Unlike the Lie case, these give nontrivial relations for $f=1$ or $\al=\be$ when $f>1$.

The Jacobi identity for the triple $\{\Xa, \Xb, \Xc\}$ with $1 \leq \al, \be, \ga \leq f$ gives us
\begin{equation}\label{2.16}
\si^{\be \ga}_{pq} \Aa_{pq,ik} - \si^{\al \be}_{pq} \Bc_{pq,ik} - \si^{\al \ga}_{pq} \Ab_{pq,ik} = 0.
\end{equation}
Again, we do not require $\al, \be, \ga$ to be distinct, which in particular gives nontrivial relations for $f \geq 1$.

In order to simplify the matrices $\Aa$ and $\Ba$ and the constants $\si^{\al \be}_{pq}$ we will make use of several transformations which leave the commutation relations \eqref{tri} invariant.  Namely
\begin{itemize}
\item  Redefining the elements of the extension:
\begin{equation}\label{2.17}
\begin{split}
&\hspace{23pt}\Xa \longrightarrow \Xa + \mu^\al_{pq} N_{pq}, \quad \mu^\alpha_{pq} \in F \\
\Rightarrow
&\begin{cases}
\Aa_{ik,ab} \longrightarrow \Aa_{ik,ab} + \de_{kb}\mu^\al_{ai} - \de_{ia}\mu^\al_{kb}\\
\Ba_{ik,ab} \longrightarrow \Ba_{ik,ab} - \de_{kb}\mu^\al_{ai} + \de_{ia}\mu^\al_{kb}.
\end{cases}
\end{split}
\end{equation}

\item  Changing the basis of $\nr(L)$:
\begin{equation}\label{2.18}
\begin{split}
&\hspace{17pt}N \longrightarrow GN, \quad G \in GL(r,F) \\
\Rightarrow
&\begin{cases}
\Aa \longrightarrow G\Aa G^{-1}\\
\Ba \longrightarrow G\Ba G^{-1}.
\end{cases}
\end{split}
\end{equation}

\item  Taking a linear combination of the elements $\Xa$.
\end{itemize}
The matrix $G$ must satisfy certain restrictions discussed later in order to preserve the commutation relations \eqref{tri} of $\nr(L)$.

Note that $N_{1n}$ is not used in \eqref{2.17} since it commutes with all the elements in $\nr(L)$.  Since \eqref{2.16} gives relations between the matrices $\Aa$, $\Ba$ and the constants $\si^{\al \be}_{pq}$, the unused constant $\mu^\al_{1n}$ can be used to scale the constants $\si^{\al \be}_{pq}$ when $f \geq 2$:
\begin{equation}\label{2.19}
\begin{split}
\Xa &\longrightarrow \Xa + \mu^\al_{1n} N_{1n}, \quad \mu^\alpha_{1n} \in F \\
\Rightarrow \si^{\al \be}_{pq} &\longrightarrow \si^{\al \be}_{pq} + \mu^\be_{1n}\Aa_{1n,pq} + \mu^\al_{1n}\Bb_{1n,pq}.
\end{split}
\end{equation}
In this transformation $\Aa$ is invariant, so we will be able to simplify some constants $\si^{\al \be}_{pq}$.

\section{Extensions of $T(4)$}

In this paper we will focus on triangular algebras $T(n)$ with $n\geq 4$ because:
\begin{itemize}
\item $T(2)$ is a one-dimensional algebra (hence by \eqref{dimension} $L$ has dimension at most 2) and the Jacobi identity gives that the only family of solvable non-Lie Leibniz algebras with one-dimensional nilradical are given by $L(c)= \langle a,b \rangle$ with $[a,a]=[a,b]=0$, $[b,a]=ca$, $[b,b]=a$ where $c \neq 0 \in F$.
\item $T(3)$ is a Heisenberg Lie algebra, and Leibniz algebras with Heisenberg nilradical were classified in \cite{heisen}.
\end{itemize}

Now we will consider the case when $n = 4$.  In particular, $N = (N_{12} N_{23} N_{34} N_{13} N_{24} N_{14})^T$ and $r = 6$.

We can proceed by considering the relations in \eqref{2.14} and \eqref{un2.14} for $1 \leq i < k \leq 4$, $1 \leq a < b \leq 4$, $k \neq a$, $b \neq i$

\begin{align}
\tag{11a}\label{3.3}\Aa_{ik,bq} N_{aq} - \Aa_{ik,pa} N_{pb} - \Aa_{ab,kq} N_{iq} + \Aa_{ab,pi} N_{pk} &= 0\\
\tag{11b}\label{un3.3}\Ba_{ik,bq} N_{aq} - \Ba_{ik,pa} N_{pb} - \Ba_{ab,kq} N_{iq} + \Ba_{ab,pi} N_{pk} &= 0.
\end{align}
\addtocounter{equation}{1}
Similarly for $1 \leq i < k = a < b \leq 4$, we obtain

\begin{align}
\tag{12a}\label{3.2}\Aa_{ib,pq} N_{pq} + \Aa_{ik,bq} N_{kq} - \Aa_{ik,pk} N_{pb} - \Aa_{kb,kq} N_{iq} + \Aa_{kb,pi} N_{pk} &= 0\\
\tag{12b}\label{un3.2}\Ba_{ib,pq} N_{pq} + \Ba_{ik,bq} N_{kq} - \Ba_{ik,pk} N_{pb} - \Ba_{kb,kq} N_{iq} + \Ba_{kb,pi} N_{pk} &= 0.
\end{align}
\addtocounter{equation}{1}

Using the linear independence of the $N_{ik}$ with equation \eqref{3.3}, we can obtain relationships among the entries of the matrices $\Aa$, summarized in the matrix below.  For example, letting $ik = 12$ and $ab = 34$, the coefficient of $N_{14}$ gives $\Aa_{12,13} + \Aa_{34,24} = 0$, the coefficient of $N_{13}$ gives $\Aa_{34,23} = 0$, and the coefficient of $N_{24}$ gives $\Aa_{12,23} = 0$.
Using equation \eqref{un3.3}, we obtain the same relationships among the entries of $\Ba$.

\begin{align*}
\Aa =& 
\begin{pmatrix}
*&0&\Aa_{12,34}&\Aa_{12,13}&*&*\\
0&*&0&*&*&*\\
\Aa_{34,12}&0&*&*&-(\Aa_{12,13})&*\\
0&0&0&*&(\Aa_{12,34})&*\\
0&0&0&(\Aa_{34,12})&*&*\\
0&0&0&0&0&*
\end{pmatrix}
\end{align*} 

Applying \eqref{3.2} and \eqref{un3.2} in the same way, $\Aa$ becomes:

\begin{align*}
\Aa =& 
\begin{pmatrix}
\Aa_{12,12}&0&0&\Aa_{12,13}&*&*\\
&\Aa_{23,23}&0&\Aa_{23,13}&\Aa_{23,24}&*\\
&&\Aa_{34,34}&*&-(\Aa_{12,13})&*\\
&&&\Aa_{12,12} + \Aa_{23,23}&0&(\Aa_{23,24})\\
&&&&\Aa_{23,23} + \Aa_{34,34}&(\Aa_{23,13})\\
&&&&&\Aa_{12,12} + \Aa_{23, 23} + \Aa_{34,34}
\end{pmatrix}
\end{align*} 
As before, $\Ba$ has the same form above.  This with \eqref{2.14cor} implies that $\Ba_{13,14} = \Ba_{23,24} = -\Aa_{23,24} = -\Aa_{13,14}$.  Similarly, $\Ba_{24,14} = -\Aa_{24,14}$ and $\Ba_{14,14} = -\Aa_{14,14}$.
 
We can further simplify matrix $\Aa$ by performing the transformation specified in \eqref{2.17}.  Choosing $\mu^\al_{12} = -\Aa_{23,13}$, $\mu^\al_{23} = \Aa_{12,13}$, $\mu^\al_{34} = \Aa_{23,24}$, $\mu^\al_{13} = -\Aa_{34,14}$, and $\mu^\al_{24} = -\Aa_{12,14}$ leads to
$$\Aa_{23,13} = \Aa_{12,13} = \Aa_{23,24} = \Aa_{34,14} = \Aa_{12,14} = 0.$$ 

This gives us the matrices

\begin{align*}
\Aa =& 
\begin{pmatrix}
\Aa_{12,12}&0&0&0&\Aa_{12,24}&0\\
&\Aa_{23,23}&0&0&0&\Aa_{23,14}\\
&&\Aa_{34,34}&\Aa_{34,13}&0&0\\
&&&\Aa_{13,13}&0&0\\
&&&&\Aa_{24,24}&0\\
\phantom{-\Aa_{12,12}}&\phantom{-\Aa_{23,23}}&\phantom{-\Aa_{34,34}}&\phantom{-\Aa_{13,13}}&\phantom{-\Aa_{24,24}}&\,\,\Aa_{14,14}\,\,
\end{pmatrix}\\
\Ba =&
\begin{pmatrix}
-\Aa_{12,12}&0&0&0&-\Aa_{12,24}&\Ba_{12,14}\\
&-\Aa_{23,23}&0&0&0&\Ba_{23,14}\\
&&-\Aa_{34,34}&-\Aa_{34,13}&0&\Ba_{34,14}\\
&&&-\Aa_{13,13}&0&0\\
&&&&-\Aa_{24,24}&0\\
&&&&&-\Aa_{14,14}
\end{pmatrix}
\end{align*}

$$\Aa_{ik,ik} = \sum^{k-1}_{p = i}\Aa_{p(p+1),p(p+1)}.$$

Note that $\Aa_{12,12}$, $\Aa_{23,23}$, and $\Aa_{34,34}$ cannot simultaneously equal 0, otherwise the nilradical would no longer be $T(4)$.  The nilindependence among the $\Aa$ implies that $T(4)$ can have at most a three-dimensional extension, since there are three parameters on the diagonal.  

The form of the matrices $\Aa$ implies that the only nonzero elements of $[\Aa, \Ab]$ are
$$[\Aa, \Ab]_{12,24},\quad [\Aa, \Ab]_{23,14},\quad [\Aa, \Ab]_{34,13}.$$
The linear independence of the $N_{ik}$ with equation \eqref{2.15}, yields
\begin{align}
\label{Acommute}[\Aa,\Ab] =& \ 0\\
\label{3.12}[\Xa,\Xb] =& \ \si^{\al \be}_{14} N_{14}.
\end{align}
For example, letting $ik = 12$ in equation \eqref{2.15}, the coefficient of $N_{24}$ implies that $[\Aa,\Ab]_{12,24}=0$ and the coefficient of $N_{14}$ implies that $\si^{\al \be}_{24} = [\Aa,\Ab]_{12,14} = 0$.  Henceforth we will abbreviate $\si^{\al \be}_{14} = \si^{\al \be}$ as all other $\si^{\al \be}_{pq} = 0$.  Since the $\Aa$ commute by \eqref{Acommute}, \eqref{2.15} and \eqref{2.15twist} imply
\begin{equation}\label{ABcommute}
[\Aa,\Bb]=0.
\end{equation}  

Considering \eqref{ABcommute} componentwise, we find that $\Bb_{12,14}(\Aa_{14,14}-\Aa_{12,12}) = \Bb_{34,14}(\Aa_{14,14}-\Aa_{34,34}) = (\Ab_{23,14} + \Bb_{23,14})(\Aa_{14,14}-\Aa_{23,23}) = 0$.
Furthermore, by \eqref{ABcommute} and \eqref{un2.15}, $0=\Ba\Ab+\Bb\Ba=(\Ab+\Bb)\Ba$. Componentwise, this tells us that
\begin{equation}\label{LindseyLemma}
	0 = \Bb_{12,14}\Aa_{14,14} = \Bb_{34,14}\Aa_{14,14} = (\Ab_{23,14} + \Bb_{23,14})\Aa_{14,14}.
\end{equation}
In particular, if $\Bb$ has a nontrivial off-diagonal entry, then 
\begin{equation}\label{offdiag}
\begin{cases}
\Bb_{12,14}\ne 0 &\Rightarrow \Aa_{12,12}=\Aa_{14,14}=0,\ \forall\alpha	\\
\Bb_{34,14}\ne 0 &\Rightarrow \Aa_{34,34}=\Aa_{14,14}=0,\ \forall\alpha	\\
\Bb_{23,14}\ne -\Ab_{23,14} &\Rightarrow \Aa_{23,23}=\Aa_{14,14}=0,\ \forall\alpha.
\end{cases}
\end{equation}

The form of the matrices $\Aa$ and $\Ba$ imply that \eqref{2.16} becomes
\begin{equation}\label{3.13}
\si^{\al \be}\Ac_{14,14}-\si^{\al \ga}\Ab_{14,14}+\si^{\be \ga}\Aa_{14,14}=0.
\end{equation}
By adding equations of form \eqref{3.13}, we get
\begin{equation}\label{Lieish}
(\si^{\be \ga} + \si^{\ga \be})\Aa_{14,14} = 0.
\end{equation}
For example $(\si^{12}+\si^{21})\Aa_{14,14}=0$ is obtained by adding \eqref{3.13} with $\be=1, \ga=2$ to \eqref{3.13} with $\be=2, \ga=1$.

On a related note, since $[\Xb,\Xb] \in \Ann_\ell(L)$, we have $0=[[\Xb,\Xb],\Xa]=[\si^{\be \be}N_{14},\Xa]= - \si^{\be \be} \Aa_{14,14} N_{14}$.  Thus,
\begin{equation}\label{Lieish2}
\si^{\be \be} \Aa_{14,14} = 0.
\end{equation}

As a consequence of \eqref{2.14cor}, \eqref{3.12}, \eqref{LindseyLemma}, \eqref{Lieish}, and \eqref{Lieish2}, we have the following result.
\begin{lemma}\label{megalem}
If $\Aa_{14,14}\ne0$ for any $\alpha=1,\ldots,f$, then the Leibniz algebra is a Lie algebra.
\end{lemma}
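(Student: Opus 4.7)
The plan is to verify that, under the hypothesis $\Aa_{14,14}\ne 0$ for some $\alpha$, the bracket on $L$ is anticommutative on every pair of basis vectors; bilinearity (in characteristic zero) then upgrades $L$ to a Lie algebra. Basis pairs inside $\nr(L)=T(4)$ are already anticommutative by \eqref{tri}, so the real content is to check anticommutativity on the mixed pairs $(\Xa,N_{ik})$ and the extension pairs $(\Xa,\Xb)$.

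For the mixed pairs, anticommutativity amounts to showing $\Ba_{ik,pq}=-\Aa_{ik,pq}$ for every $\alpha$, $ik$, $pq$. Equation \eqref{2.14cor} handles every case with $pq\ne 14$, and the explicit simplified matrix form already derived gives $\Ba_{14,14}=-\Aa_{14,14}$ since the bottom-right diagonal entry decomposes as a sum of diagonal entries each controlled by \eqref{2.14cor}. What remains are the three entries $\Ba_{12,14}$, $\Ba_{23,14}$, $\Ba_{34,14}$. Here the hypothesis is deployed through \eqref{LindseyLemma}: fixing any $\alpha$ with $\Aa_{14,14}\ne 0$ and letting $\beta$ range, that equation forces $\Bb_{12,14}=0$, $\Bb_{34,14}=0$, and $\Bb_{23,14}=-\Ab_{23,14}$ for every $\beta$. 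Because the earlier normalization of the $\Xa$ with $\mu^\alpha_{13}=-\Aa_{34,14}$ and $\mu^\alpha_{24}=-\Aa_{12,14}$ has already zeroed $\Aa_{12,14}$ and $\Aa_{34,14}$, these three assertions read exactly as $\Bb_{pq,14}=-\Ab_{pq,14}$ in all three positions. Hence $\Ba=-\Aa$ for every $\alpha$.

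For an extension pair, equation \eqref{3.12} has already reduced the bracket to $[\Xa,\Xb]=\sigma^{\alpha\beta}N_{14}$, so anticommutativity is the single scalar condition $\sigma^{\alpha\beta}+\sigma^{\beta\alpha}=0$. When $\alpha\ne\beta$, this is precisely \eqref{Lieish} combined with the hypothesis; when $\alpha=\beta$, it is the vanishing $\sigma^{\alpha\alpha}=0$ supplied by \eqref{Lieish2} and the hypothesis. Together with the previous paragraph, this gives anticommutativity on all basis pairs, so $L$ is Lie.

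There is no serious technical obstacle: the lemma is an assembly of consequences already extracted earlier. The only care required is bookkeeping, namely, separating which off-diagonal entries in column $pq=14$ are governed unconditionally by \eqref{2.14cor} and the decomposition of the diagonal, from those three specific entries that genuinely need the hypothesis through \eqref{LindseyLemma}, \eqref{Lieish}, and \eqref{Lieish2}.
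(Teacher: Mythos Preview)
Your proof is correct and follows exactly the route the paper intends: the paper simply records the lemma as ``a consequence of \eqref{2.14cor}, \eqref{3.12}, \eqref{LindseyLemma}, \eqref{Lieish}, and \eqref{Lieish2}'' without further elaboration, and your argument is precisely the unpacking of how those five facts combine to force $\Ba=-\Aa$ and $\si^{\al\be}+\si^{\be\al}=0$. Your bookkeeping on which column-$14$ entries are handled by \eqref{2.14cor} and the diagonal decomposition, versus which genuinely require \eqref{LindseyLemma}, matches the paper's simplified matrix forms.
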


The form of the matrices $\Aa$ and $\Ba$ imply that the transformation \eqref{2.19} becomes
\begin{equation}\label{3.14}
\si^{\al \be} \longrightarrow \si^{\al \be} + \mu^\be_{14}\Aa_{14,14} - \mu^\al_{14}\Ab_{14,14}.
\end{equation}

For $f=2$, suppose $\Aa_{14,14} \neq 0$ for some $\al$, and without loss of generality assume that $\al=1$.  Then choosing $\mu^1_{14}=0$ and $\mu^2_{14}=-\frac{\si^{12}}{A^1_{14,14}}$; \eqref{3.14} makes $\si^{12}=0$.  For $f=3$, suppose $\Aa_{14,14} \neq 0$ for some $\al$, and without loss of generality assume that $\al=1$.  Then choosing $\mu^1_{14}=0$ and $\mu^\be_{14}=-\frac{\si^{1\be}}{A^1_{14,14}}$ for $\be=2,3$; \eqref{3.14} makes $\si^{1\be}=0$.  By \eqref{3.13}, we also have $\si^{23}=0$.  Combining these results for $f=2$, 3 and employing \eqref{Lieish2} for $f=1$, we have:
\begin{equation}\label{3.15}
[\Xa,\Xb] = 
\begin{cases}
\si^{\al \be}N_{14} & \text{if }A^1_{14,14}=\cdots=A^f_{14,14}=0 \\
0 & \text{otherwise.}
\end{cases}
\end{equation}

Now we will utilize matrices $G$ to simplify the structure of matrices $\Aa$ and $\Ba$.  Perform transformation \eqref{2.18}, given by $N\longrightarrow G_1 N$, with
$$G_1 = \begin{pmatrix}
1&0&0&0&g_1&g_0\\
 &1&0&0&0&g_2	\\
 & &1&g_3&0&g_4\\
 & & &1&0&0\\
 & & & &1&0\\
 & & & & &1
\end{pmatrix}.$$
Observe that $G_1$ acts invariantly on the commutation relations \eqref{tri}. It does, however, transform matrices $\Aa$ and $\Ba$ by $\Aa\longrightarrow G_1 \Aa G_1^{-1}$ and $\Ba\longrightarrow G_1 \Ba G_1^{-1}$, respectively.  In particular, $G_1$ transforms the following components
\begin{eqnarray*}
\begin{cases}
	\Aa_{12,24}	\longrightarrow	\Aa_{12,24} + g_1(\Aa_{24,24}-\Aa_{12,12})	\\
	\Aa_{23,14}	\longrightarrow	\Aa_{23,14} + g_2(\Aa_{14,14}-\Aa_{23,23})	\\
	\Aa_{34,13}	\longrightarrow	\Aa_{34,13} + g_3(\Aa_{13,13}-\Aa_{34,34})	
\end{cases}\\
\begin{cases}
	\Ba_{12,14}	\longrightarrow	\Ba_{12,14} - g_0(\Aa_{14,14}-\Aa_{12,12})	\\
	\Ba_{23,14}	\longrightarrow	\Ba_{23,14} - g_2(\Aa_{14,14}-\Aa_{23,23})	\\
	\Ba_{34,14}	\longrightarrow	\Ba_{34,14} - g_4(\Aa_{14,14}-\Aa_{34,34}).	
\end{cases}
\end{eqnarray*}
We use the matrix $G_1$ to eliminate some entries in $\Aa$ and $\Ba$. However, if $\Ba_{12,14}$ or $\Ba_{34,14}$ is not zero, then by \eqref{offdiag}, $G_1$ leaves that entry of $\Ba$ invariant. Hence, we can use $g_1$, $g_2$, and $g_3$ to eliminate at most 3 off-diagonal elements.

Note: In this step, we consider only transformations of the form $G_1=\begin{pmatrix} I&*\\0&I\end{pmatrix}$. Any other transformation which leaves \eqref{tri} and the form of $\Aa$ invariant, but eliminates $\Ba_{12,14}$ or $\Ba_{34,14}$, 
would provide an isomorphism from %
a non-Lie Leibniz algebra to a Lie algebra. 
It is, however, possible to scale such entries, which we will consider in the next case. 

Let the diagonal matrix $G_2$ be 
$$G_2=\begin{pmatrix}
g_{12}&&&&&\\
&g_{23}&&&&\\
&&g_{34}&&&\\
&&&g_{12}g_{23}&&\\
&&&&g_{23}g_{34}&\\
&&&&&g_{12}g_{23}g_{34}
\end{pmatrix},\quad g_{ik}\in F\backslash\{0\}.$$
Note that $G_2$ preserves commutation relations \eqref{tri}.  Our transformation of $\nr(L)$ will be defined by $G=G_2 G_1$. Observe that $G_2$ transforms $\Aa$ and $\Ba$ by $\Aa_{ik,ab}\longrightarrow \dfrac{g_{ik}}{g_{ab}}\Aa_{ik,ab}$ and $\Ba_{ik,ab}\longrightarrow \dfrac{g_{ik}}{g_{ab}}\Ba_{ik,ab}$, respectively, where $g_{ik}=(G_2)_{ik}=\prod\limits^{k-1}_{j=i}g_{j(j+1)}$. Hence, we can scale up to three nonzero off-diagonal elements to 1. In the case of Lie algebras, it may be necessary to scale to $\pm 1$ over $F=\R$.  This issue does not arise in Leibniz algebras of non-Lie type, because we have greater restrictions on the number of nonzero entries.

\subsection{Leibniz algebras $L(4,1)$}
The Lie cases for $\nr(L)=T(4)$, with $f=1$, have been previously classified in \cite{tw}, so we will focus on the Leibniz algebras of non-Lie type. We know that all such algebras will have $A^1_{14,14}=0$ by Lemma \ref{megalem}, where $A=A^1$ will be of the form found in \cite{tw}. Since $A$ is not nilpotent and $A_{14,14}=0$, we know that there is at most 1 nonzero off-diagonal entry in $A$. Altogether, there are 10 classes of Leibniz algebras of non-Lie type. Of these, there are 2 two-dimensional families, and 8 one-dimensional families. The matrices $A$ and $B$ for these can be found in Table \ref{L41} in the appendix.

\subsection{Leibniz algebras $L(4,2)$}
Again, all Lie cases for $\nr(L)=T(4)$, with $f=2$, were classified in \cite{tw}.  So, focusing on Leibniz algebras of non-Lie type, we again require that $A^1_{14,14}=A^2_{14,14}=0$ by Lemma \ref{megalem}.  As a result, if $\Ba_{ik,14}\ne-\Aa_{ik,14}$, for any $\al$ or pair $ik$, then $\Ab_{ik,ik}=\Ab_{14,14}=0$ $\forall \be$, which makes it impossible to have two linearly nilindependent matrices $A^1$ and $A^2$. Therefore, there is 1 four-dimensional family of Leibniz algebras of non-Lie type, and their matrices $A^1=-B^1$ and $A^2=-B^2$ can be found in Table \ref{L42} in the appendix.

\subsection{Leibniz algebras $L(4,3)$}
There is only one Lie algebra that is a three-dimensional extension of $T(4)$, again given in \cite{tw}. Since we cannot have three linearly nilindependent matrices of form $\Aa$ with $A^1_{14,14} = A^2_{14,14} = A^3_{14,14} = 0$, it is impossible to have a three-dimensional extention of $T(4)$ that is of non-Lie type, by Lemma \ref{megalem}.

\begin{theorem}\label{L4f}
Every Leibniz algebra $L(4,f)$ is either of Lie type, or is isomorphic to precisely one algebra represented in Table \ref{L41} or Table \ref{L42}.
\end{theorem}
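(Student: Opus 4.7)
The plan is to tie together the case-by-case work from Subsections 4.1, 4.2, and 4.3 into the stated classification. First I would apply Lemma \ref{megalem} to restrict to the non-Lie setting: any non-Lie $L(4,f)$ must have $A^\alpha_{14,14}=0$ for every $\alpha$. Since $\Aa_{14,14}=\Aa_{12,12}+\Aa_{23,23}+\Aa_{34,34}$, this imposes one linear relation on the three independent diagonal parameters of each $\Aa$, so linear nilindependence of $\{A^1,\ldots,A^f\}$ immediately bounds $f\le 2$, eliminating the $f=3$ case addressed in Subsection 4.3 and, together with the diagonal count, ruling out any larger $f$.

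For $f=1$, I would take $A=A^1$ in the reduced form (non-nilpotent, diagonal determined by $A_{12,12},A_{23,23},A_{34,34}$ with $A_{14,14}=0$, and at most one nonzero off-diagonal entry among $A_{12,24},A_{23,14},A_{34,13}$), then branch on which diagonal entries vanish and which off-diagonal entries of $A$ and $B$ survive after applying the transformations $G_1$ and $G_2$. The remaining freedom---rescaling $X^1$, shifts of the form \eqref{2.17}, and the nonzero off-diagonal scalings supplied by $G_2$---should reduce each configuration to exactly one representative, producing the ten normal forms of Table \ref{L41}. For $f=2$, the constraints \eqref{offdiag} combined with nilindependence of $A^1,A^2$ force $\Bb_{ik,14}=-\Ab_{ik,14}$ for every admissible $\beta$ and $ik$; otherwise some $\Ab_{ik,ik}$ and $\Ab_{14,14}$ vanish simultaneously, collapsing the available diagonal dimension below two. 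After using \eqref{2.17}--\eqref{2.19} together with the $GL(2)$ action on $(X^1,X^2)$ to normalize, a single four-parameter family remains, matching Table \ref{L42}.

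The main obstacle is the uniqueness claim---that distinct entries of Tables \ref{L41} and \ref{L42} really are pairwise non-isomorphic. To handle this I would compute discrete invariants: the ranks and nullities of $L_{\Xa}$ and $R_{\Xa}$ acting on $\nr(L)$, the rank of $\Aa+\Ba$, and $\dim\Ann_\ell(L)$; these suffice to separate the discrete families and, in particular, to separate the Lie representatives from the non-Lie ones (since for a non-Lie entry $\Ann_\ell(L)\cap\nr(L)$ is strictly larger than the center of $\nr(L)$). Within the two-dimensional family in Table \ref{L41} and the four-dimensional family in Table \ref{L42}, I would verify that the remaining continuous parameters are preserved (up to a small explicit group of residual symmetries) by any basis change that leaves the normal form intact, so no further identifications occur; this is essentially a check that the normalization procedure of Section 3 has already exhausted the freedom from \eqref{2.17}, \eqref{2.18}, and \eqref{2.19}.
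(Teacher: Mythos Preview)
Your outline follows essentially the same route as the paper: the theorem is a summary of Subsections~4.1--4.3, and your use of Lemma~\ref{megalem} to force $A^\alpha_{14,14}=0$, the ensuing bound $f\le 2$ from nilindependence of the diagonals, the single-off-diagonal argument for $f=1$, and the elimination of extra $B$-entries for $f=2$ via \eqref{offdiag} are exactly what the paper does. The paper itself does not spell out the ``precisely one'' part beyond asserting the normalizations exhaust the allowed transformations, so your plan to verify uniqueness via invariants goes somewhat beyond what is written there.

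One concrete step in your uniqueness sketch does not work as stated. You claim that for a non-Lie entry $\Ann_\ell(L)\cap\nr(L)$ is strictly larger than the center of $\nr(L)$. But any $x\in\Ann_\ell(L)\cap T(4)$ must in particular left-annihilate $T(4)$, and since $T(4)$ is Lie this forces $x\in Z(T(4))=\spn\{N_{14}\}$; conversely $[N_{14},X^\alpha]=-A^\alpha_{14,14}N_{14}=0$ in every non-Lie case. Hence $\Ann_\ell(L)\cap\nr(L)=Z(\nr(L))$ exactly, not strictly larger, and this invariant does not separate anything here. (Nor is separating Lie from non-Lie an issue: a non-Lie Leibniz algebra can never be isomorphic to a Lie algebra, so that comparison is vacuous.) The other invariants you list---the rank of $A^\alpha+B^\alpha$, which detects whether $B^\alpha\ne -A^\alpha$, together with the Jordan data of $L_{X^\alpha}$ on $\nr(L)$---are the right kind of tools; you should rely on those and on tracking which residual scalings (of $X^\alpha$ and via $G_2$) survive after normalization, rather than on $\Ann_\ell(L)\cap\nr(L)$.
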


\section{Solvable Lie algebras $L(n,f)$ for $n\ge4$}

We are now going to consider Leibniz algebras $L$ with $\nr(L)=T(n)$. Recall from \eqref{2.14cor}, $\Aa_{ik,ab} = -\Ba_{ik,ab}$ for all $ab\ne 1n$. We have the following result:
\begin{lemma}\label{Astructure}
Matrices $\Aa=(\Aa_{ik,ab})$ and $\Ba=(\Ba_{ik,ab})$, $1\le i<k\le n$, $1\le a < b\le n$ have the following properties.
\begin{enumerate}
	\item[i.]	$\Aa$ and $\Ba$ are upper-triangular.
	\item[ii.]	The only off-diagonal elements of $\Aa$ and $\Ba$ which may not be eliminated by an appropriate transformation on $\Xa$ are:
	\begin{align*}\label{lem1.2A}
		\Aa_{12,2n},\quad \Aa_{j(j+1),1n} \ (2\le j\le n-2),\quad \Aa_{(n-1)n,1(n-1)},	\\
		\Ba_{12,2n},\quad \Ba_{j(j+1),1n} \ (1\le j\le n-1),\quad \Ba_{(n-1)n,1(n-1)}.
	\end{align*}
	\item[iii.]	The diagonal elements $\Aa_{i(i+1),i(i+1)}$ and $\Ba_{i(i+1),i(i+1)}$, $1\le i\le n-1$, are free.  The remaining diagonal elements of $\Aa$ and $\Ba$ satisfy
		\begin{equation*}\label{lem1.3}
			\Aa_{ik,ik} = \sum\limits^{k-1}_{j=i}\Aa_{j(j+1),j(j+1)},\quad \Ba_{ik,ik} = \sum\limits^{k-1}_{j=i}\Ba_{j(j+1),j(j+1)},\quad k>i+1.
		\end{equation*}
\end{enumerate}
\end{lemma}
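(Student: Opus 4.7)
The plan is to generalize the explicit $T(4)$ calculation of Section~4 to arbitrary $n\ge 4$. Throughout, \eqref{2.14cor} lets us concentrate on $A^\alpha$: it pins $B^\alpha_{ab,ik}=-A^\alpha_{ab,ik}$ for every column $ik\neq 1n$, so everything about $B^\alpha$ follows from the analogous statement about $A^\alpha$, except in the one column indexed by $N_{1n}$, which has to be treated separately.

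I would establish (i) and (iii) by processing the Jacobi relations in two stages. First, for disjoint index pairs ($k\neq a$ and $b\neq i$) the identity \eqref{3.3} together with the linear independence of $\{N_{pq}\}$ yields $A^\alpha_{ab,cd}=0$ whenever $cd$ precedes $ab$ in the off-diagonal ordering of $N$. Second, for $k=a$ the identity \eqref{3.2}, upon reading the coefficient of $N_{ib}$, produces the telescoping relation
\[
A^\alpha_{ib,ib}=A^\alpha_{ik,ik}+A^\alpha_{kb,kb},\qquad i<k<b,
\]
from which (iii) follows by induction on $b-i$. The remaining coefficients of \eqref{3.2} eliminate the last lower-triangular entries and force the sign-linked relations between upper off-diagonal entries (the analogues of $A^\alpha_{13,24}=-A^\alpha_{12,13}$ from the $T(4)$ analysis) that are needed in the next step.

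For (ii), the key observation is that a single parameter $\mu^\alpha_{pq}$ with $pq\neq 1n$ modifies $A^\alpha$ only at the entries $A^\alpha_{qk,pk}$ for $k>q$ (shift by $+\mu^\alpha_{pq}$) and $A^\alpha_{ip,iq}$ for $i<p$ (shift by $-\mu^\alpha_{pq}$). Thanks to the sign-linked relations just established, one such $\mu^\alpha_{pq}$ can zero an entire family of entries at once, and exhausting the available parameters reduces $A^\alpha$ to the claimed form. Direct inspection of \eqref{2.17} confirms that each of the families $A^\alpha_{12,2n}$, $A^\alpha_{j(j+1),1n}$ for $2\le j\le n-2$, and $A^\alpha_{(n-1)n,1(n-1)}$ is immune to every $\mu^\alpha_{pq}$, because in each case both Kronecker deltas $\delta_{kb}$ and $\delta_{ia}$ vanish. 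For $B^\alpha$ outside column $1n$ the statement transfers via \eqref{2.14cor}; in column $1n$ the extra entries $B^\alpha_{12,1n}$ and $B^\alpha_{(n-1)n,1n}$ survive because \eqref{2.17} shifts $A^\alpha_{ik,1n}$ and $B^\alpha_{ik,1n}$ with opposite signs, so the $\mu^\alpha$ used to kill the $A$-entry cannot simultaneously kill its $B$-companion. This accounts for the asymmetry between the $A$- and $B$-lists in (ii).

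The principal obstacle is the combinatorial bookkeeping: one must verify that the coefficients extracted from \eqref{3.2} really do produce the exact sign-linked constraints that let each $\mu^\alpha_{pq}$ eliminate a whole family without collisions, and that every off-diagonal entry other than those listed in (ii) is in fact addressed by some available parameter. Once the indices are tracked carefully, the argument is a routine generalization of the explicit $T(4)$ calculation in the preceding section.
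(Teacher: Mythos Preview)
Your outline is essentially correct and follows the same route as the paper: the $A^\alpha$ statement is the Tremblay--Winternitz result (the paper simply cites \cite{tw}, proving it by induction on $n$ from \eqref{2.14}), and property~(ii) for $B^\alpha$ is then read off from \eqref{2.14cor}, with the two extra surviving entries $B^\alpha_{12,1n}$ and $B^\alpha_{(n-1)n,1n}$ arising exactly for the reason you give.

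The one place you diverge slightly is in how you handle properties~(i) and~(iii) for $B^\alpha$. You propose transferring them from $A^\alpha$ via \eqref{2.14cor}, which forces you to treat the column $1n$ separately (in particular, \eqref{2.14cor} says nothing about $B^\alpha_{1n,1n}$, so property~(iii) for $ik=1n$ is not covered by your transfer argument and still needs its own justification). The paper avoids this detour: since \eqref{un2.14} has \emph{exactly} the same form as \eqref{2.14} with $B^\alpha$ in place of $A^\alpha$, the entire $A^\alpha$ argument for~(i) and~(iii) applies verbatim to $B^\alpha$, with no exceptional column. Your approach works too, but the paper's is cleaner and sidesteps the bookkeeping you flag at the end.
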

\begin{proof}
	The form of the matrices $\Aa$ given in Lemma \ref{Astructure} follows from \eqref{2.14} by induction on $n$, as shown in \cite{tw}. Similarly, properties $i.$ and $iii.$ follow for $\Ba$ from \eqref{un2.14}.  Property $ii.$ for matrices $\Ba$ follows from \eqref{2.14cor}. 
\end{proof}
As a consequence of property $iii.$ and \eqref{2.14cor}, we have that $\Aa_{1n,1n}=-\Ba_{1n,1n}$.
%
%
Lemma \ref{Astructure} asserts that $\Aa$ has $n-1$ free entries on the diagonal and nonzero off-diagonal entries possible in only $n-1$ locations, represented by $*$ in the matrix below.  The form of $\Ba$ is the same, save for nonzero off-diagonal entries possible in two additional locations, represented by $b_1$ and $b_2$ in the matrix below.

$\hfill\left(
\begin{array}{ccccc|ccccc}
 * &&&&				&		&& &*&b_1 \\
& * &&&				&		&& &&* \\
&& \ddots &&	&		&& &&\vdots \\
&&& * &				&		&& && * \\
&&&& * 				&		&& *&&b_2 \\
\hline						
&&&&					&		*&&&&\\
&&&&					&		& \ddots &&& \\
&&&&					&		&&*&&\\
&&&&					&		&&&*&\phantom{\ddots}\\
\phantom{\ddots}&\phantom{\ddots}&\phantom{\ddots}&\phantom{\ddots}&\phantom{\ddots}&\phantom{\ddots}&\phantom{\ddots}&\phantom{\ddots}&\phantom{\ddots}&*
\end{array}
\right)
\hfill$

\begin{lemma}
The maximum degree of an extension of $T(n)$ is $f=n-1$.
\end{lemma}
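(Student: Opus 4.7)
The plan is to bound $f$ by counting the independent diagonal parameters of the matrices $\Aa$ and to convert a coincidence of diagonals into a genuinely nilpotent element of $L$ via linear nilindependence. By Lemma \ref{Astructure}, each $\Aa$ is upper triangular, and its entire diagonal is determined by the $n-1$ free scalars $\Aa_{i(i+1),i(i+1)}$, $1\le i\le n-1$. Using the same lemma for $\Ba$ together with $\Aa_{ik,ab}=-\Ba_{ik,ab}$ (and, on the $(1n,1n)$ entry, $\Aa_{1n,1n}=-\Ba_{1n,1n}$ as noted after the lemma), $\Ba$ is upper triangular with diagonal equal to minus the diagonal of $\Aa$.

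First I would define the linear map
$$\Phi:\spn\{X^1,\ldots,X^f\}\longrightarrow F^{n-1},\qquad \Phi(\Xa)=\bigl(\Aa_{12,12},\Aa_{23,23},\ldots,\Aa_{(n-1)n,(n-1)n}\bigr),$$
and argue by contradiction. If $f\ge n$, then $\Phi$ has a nontrivial kernel, producing scalars $c_1,\ldots,c_f$, not all zero, such that $A:=\sum_\alpha c_\alpha \Aa$ has all $n-1$ free diagonal entries equal to $0$. Property $iii.$ of Lemma \ref{Astructure} then forces every diagonal entry of $A$ to vanish, so $A$ is strictly upper triangular and hence nilpotent; the corresponding statement for $B:=\sum_\alpha c_\alpha \Ba$ follows from the diagonal relationship noted above.

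Next I would lift matrix nilpotence to nilpotence of $X:=\sum_\alpha c_\alpha \Xa$ in $L$. Since $L$ is solvable, $[L,L]\subseteq\nr(L)$, so both $L_X(L)\subseteq\nr(L)$ and $R_X(L)\subseteq\nr(L)$; and on $\nr(L)$ these endomorphisms are represented precisely by $A$ and $B$. Hence if $A^m=B^m=0$ as matrices, we obtain $L_X^{m+1}=R_X^{m+1}=0$ on all of $L$, so $X$ is a nilpotent element of $L$. This contradicts the linear nilindependence of $\{X^1,\ldots,X^f\}$, giving $f\le n-1$.

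The step I expect to be most delicate is the translation from matrix nilpotence of $A$ on $\nr(L)$ to nilpotence of $X$ as an element of $L$, because the definition of a nilpotent Leibniz element requires control of both $L_X$ and $R_X$ on all of $L$, not merely on $\nr(L)$. Solvability of $L$ dissolves this issue in a single line, so the argument should be quite short; the real structural content has already been packaged into Lemma \ref{Astructure}.
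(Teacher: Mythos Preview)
Your proof is correct and follows essentially the same approach as the paper: count the $n-1$ free diagonal parameters of the $\Aa$ guaranteed by Lemma~\ref{Astructure}, and invoke nilindependence to rule out $f\ge n$. The paper's proof is a single sentence to this effect; your version spells out the passage from nilpotence of the matrix $A=\sum_\alpha c_\alpha \Aa$ to nilpotence of the element $X=\sum_\alpha c_\alpha \Xa$ in $L$ (using $[L,L]\subseteq\nr(L)$ and the diagonal relation between $\Aa$ and $\Ba$), a point the paper takes for granted by simply asserting that the matrices $\Aa$ are nilindependent.
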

\begin{proof}
	The proof follows from the fact that the $\Aa$ are nilindependent and that we have, at most, $n-1$ parameters along the diagonal.
\end{proof}

The form of the matrices $\Aa$ implies that the only nonzero elements of $[\Aa, \Ab]$ are
$$[\Aa,\Ab]_{12,2n},\quad [\Aa,\Ab]_{j(j+1),1n} \ (2\le j\le n-2),\quad [\Aa,\Ab]_{(n-1)n,1(n-1)}.$$
As before, the linear independence of the $N_{ik}$ with equations \eqref{2.15} and \eqref{2.15twist}, yields
\begin{align}
\label{Acommute2}[\Aa,\Ab] =&	\ 0\\
\label{ABcommute2}[\Aa,\Bb]=&	\	0\\
\nonumber [\Xa,\Xb] =& \ \si^{\al \be}_{1n} N_{1n}.
\end{align}

From Lemma \ref{Astructure}, \eqref{2.16} becomes
\begin{equation*}\label{3.13b}
\si^{\al \be}\Ac_{1n,1n}-\si^{\al \ga}\Ab_{1n,1n}+\si^{\be \ga}\Aa_{1n,1n}=0.
\end{equation*}

\begin{lemma}\label{commutelem}
Matrices $\Aa$ and $\Ba$ can be transformed to a canonical form satisfying
\begin{align*}
[\Aa, \Ab] &= 0	\\
[\Aa, \Bb] &= 0	\\
[\Xa,\Xb] &= 
	\begin{cases}
		\si^{\al \be}N_{1n} & \text{if }A^1_{1n,1n}=\cdots=A^f_{1n,1n}=0 \\
		0 & \text{otherwise.}
	\end{cases}
\end{align*}
\end{lemma}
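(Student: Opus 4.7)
The plan is to transplant the argument used in Section 4 for $L(4,f)$ into the general $L(n,f)$ setting. The first two displayed equalities $[\Aa,\Ab]=0$ and $[\Aa,\Bb]=0$ have in fact already been recorded as \eqref{Acommute2} and \eqref{ABcommute2} in the text preceding the lemma: they follow from the Jacobi identities \eqref{2.15} and \eqref{2.15twist} combined with the sparse structure of $\Aa$ guaranteed by Lemma \ref{Astructure}. Thus the substantive content of the lemma is the dichotomy for $[\Xa,\Xb]$.

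First I would observe that, by Lemma \ref{Astructure}, the only potentially nonzero entries of $[\Aa,\Ab]$ lie in the coordinates $(12,2n)$, $(j(j+1),1n)$ for $2\le j\le n-2$, and $((n-1)n,1(n-1))$. Matching these against $\si^{\al\be}_{kq}N_{iq} - \si^{\al\be}_{pi}N_{pk}$ on the right-hand side of \eqref{2.15} and invoking linear independence of the $N_{ik}$ forces $\si^{\al\be}_{pq}=0$ for every $pq\ne 1n$; write $\si^{\al\be}:=\si^{\al\be}_{1n}$. Equation \eqref{2.16} then collapses to the scalar relation $\si^{\al\be}\Ac_{1n,1n} - \si^{\al\ga}\Ab_{1n,1n} + \si^{\be\ga}\Aa_{1n,1n} = 0$, the direct analogue of \eqref{3.13}. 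Symmetrizing in $(\be,\ga)$ yields $(\si^{\be\ga}+\si^{\ga\be})\Aa_{1n,1n}=0$, and from $[\Xb,\Xb]\in\Ann_\ell(L)$ we get $0=[[\Xb,\Xb],\Xa]=-\si^{\be\be}\Aa_{1n,1n}N_{1n}$, so $\si^{\be\be}\Aa_{1n,1n}=0$.

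If every $A^\alpha_{1n,1n}=0$, then $[\Xa,\Xb]=\si^{\al\be}N_{1n}$ is already in the stated canonical form and we are finished. Otherwise, relabeling if needed, assume $A^1_{1n,1n}\ne 0$. Using $\Ba_{1n,1n}=-\Aa_{1n,1n}$ together with Lemma \ref{Astructure}, the transformation \eqref{2.19} specializes to
\[
\si^{\al\be} \longrightarrow \si^{\al\be} + \mu^\be_{1n}\Aa_{1n,1n} - \mu^\al_{1n}\Ab_{1n,1n},
\]
while leaving each $\Aa$ invariant. Setting $\mu^1_{1n}=0$ and $\mu^\be_{1n}=-\si^{1\be}/A^1_{1n,1n}$ for $\be\ge 2$ kills $\si^{1\be}$ for every $\be$. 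Plugging $\al=1$ into the collapsed \eqref{2.16} then reduces it to $\si^{\be\ga}A^1_{1n,1n}=0$, so $\si^{\be\ga}=0$ for all $\be,\ga$, giving $[\Xa,\Xb]=0$ as claimed.

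The argument is essentially the $n=4$ argument re-indexed, so I expect no conceptual obstacle. The main points requiring care are verifying that the transformation \eqref{2.19} really does specialize to the scalar form above (which rests on Lemma \ref{Astructure} forcing $\Aa_{1n,pq}$ to vanish off the entry $pq=1n$), and checking that the chosen scalings $\mu^\be_{1n}$ do not undo the normal forms of $\Aa$ and $\Ba$ already established --- this is automatic since $N_{1n}$ lies in the center of $\nr(L)$ and hence does not appear in any earlier transformation affecting off-diagonal entries.
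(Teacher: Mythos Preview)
Your argument is correct and is exactly the approach the paper intends: the paper's own proof simply states that the first two identities were already shown (your references to \eqref{Acommute2} and \eqref{ABcommute2}) and that the third follows by the same reasoning as \eqref{3.15}, using $(\si^{\be\ga}+\si^{\ga\be})\Aa_{1n,1n}=0$ and $\si^{\be\be}\Aa_{1n,1n}=0$. You have written out that referenced argument in full for general $n$, including the reduction of \eqref{2.19} to the scalar form and the use of the collapsed \eqref{2.16} with $\al=1$ to force all remaining $\si^{\be\ga}$ to vanish; the only imprecision is the phrase ``kills $\si^{1\be}$ for every $\be$'' --- for $\be=1$ this is not effected by the transformation but is already forced by $\si^{11}A^1_{1n,1n}=0$.
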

\begin{proof}
The first two identities of this lemma have already been shown.  The argument for the third identity is the same as \eqref{3.15}, using $(\si^{\be \ga} + \si^{\ga \be})\Aa_{1n,1n} = 0$ and $\si^{\be \be} \Aa_{1n,1n} = 0$.
\end{proof}

\subsection{Change of basis in $\nr(L(n,f))$}

As before, we perform the transformation \eqref{2.18} on $N$ by use of the matrix $G_1$, with $G_1$ defined to be all zeroes except $(G_1)_{ik,ik} = 1$, and $(G_1)_{12,1n}, (G_1)_{(n-1)n,1n}, (G_1)_{ab,ik} \in F$ where $\{ab,ik\}=\{12,2n\}, \{(n-1)n,1(n-1)\}, \{j(j+1),1n\}$ for $2 \leq j \leq n-2$.  Therefore, the zero entries of $G_1$ are the off-diagonal entries of $\Ba$ that are guaranteed to be zero. Note that the transformation given by $G_1$ preserves commutation relation \eqref{tri}.  Matrices $\Aa$ and $\Ba$ are transformed by conjugation with $G_1$, leaving the diagonal elements invariant and giving
\begin{eqnarray*}
\begin{split}
\Aa_{ik,ab}	&\longrightarrow	\Aa_{ik,ab} + (G_1)_{ik,ab}(\Aa_{ab,ab}-\Aa_{ik,ik})	\\
\Ba_{ik,ab}	&\longrightarrow	\Ba_{ik,ab} - (G_1)_{ik,ab}(\Aa_{ab,ab}-\Aa_{ik,ik}).
\end{split}
\end{eqnarray*}

By \eqref{ABcommute2}, we have that $0=[\Aa,\Bb]_{12,1n} = \Bb_{12,1n}(\Aa_{12,12}-\Aa_{1n,1n})$ and $0=[\Aa,\Bb]_{(n-1)n,1n} = \Bb_{(n-1)n,1n}(\Aa_{(n-1)n,(n-1)n}-\Aa_{1n,1n})$. Consequently, $G_1$ cannot eliminate the entries $\Bb_{12,1n}$ and $\Bb_{(n-1)n,1n}$.

\begin{lemma}\label{lemma4}
Matrices $\Aa$ and $\Ba$ will have a nonzero off-diagonal entry $\Aa_{ik,ab}$ or $\Ba_{ik,ab}$, respectively, only if 
$$\Ab_{ik,ik}=\Ab_{ab,ab},\quad \forall\be=1,\ldots,f.$$
\end{lemma}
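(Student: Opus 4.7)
My plan is to combine the commutator identities from Lemma~\ref{commutelem} with the $G_1$ change of basis described immediately above the lemma. I would first compute the $(ik,ab)$ entry of $[\Aa,\Ab]$ for each off-diagonal position $(ik,ab)$ allowed by Lemma~\ref{Astructure}. Because both $\Aa$ and $\Ab$ are upper-triangular and their off-diagonal support lies only at the sparse set of positions listed there, the only indices $pq$ contributing to $\sum_{pq}\bigl(\Aa_{ik,pq}\Ab_{pq,ab}-\Ab_{ik,pq}\Aa_{pq,ab}\bigr)$ are $pq\in\{ik,ab\}$. The commutator therefore reduces to
\[
\Ab_{ik,ab}(\Aa_{ik,ik}-\Aa_{ab,ab}) \,=\, \Aa_{ik,ab}(\Ab_{ik,ik}-\Ab_{ab,ab}) \qquad (*)
\]
for all $\al,\be$.

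For the $\Aa$ statement, I would take $\Aa_{ik,ab}\ne 0$ for some $\al$ and argue by contradiction: suppose $\Ab_{ik,ik}\ne\Ab_{ab,ab}$ for some $\be$. Then the right side of $(*)$ is nonzero, so $\Aa_{ik,ik}\ne\Aa_{ab,ab}$ and $\Ab_{ik,ab}\ne 0$. Set $r=\Aa_{ik,ab}/(\Aa_{ik,ik}-\Aa_{ab,ab})$. Pairwise application of $(*)$ with $\al$ and any third $\ga$ gives $\Ac_{ik,ab}=r(\Ac_{ik,ik}-\Ac_{ab,ab})$ for every $\ga$ (in the degenerate case $\Ac_{ik,ik}=\Ac_{ab,ab}$ the relation forces $\Ac_{ik,ab}=0$, which is consistent). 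Applying the $G_1$ transformation with $(G_1)_{ik,ab}=r$ and the identity elsewhere preserves the commutation relations of $\nr(L)$ and, by the displayed transformation rule, sends $\Ac_{ik,ab}\mapsto 0$ simultaneously for every $\ga$, contradicting the assumption that $\Aa_{ik,ab}$ is nonzero in canonical form. For the $\Ba$ statement I would split on the column $ab$. When $ab\ne 1n$, \eqref{2.14cor} gives $\Ba_{ik,ab}=-\Aa_{ik,ab}$ and the claim reduces to the $\Aa$ case. When $ab=1n$, so $(ik,ab)=(j(j+1),1n)$, the same sparse-sum computation applied to $[\Aa,\Ba]_{j(j+1),1n}=0$ with $\be=\al$ collapses to
\[
\bigl(\Ba_{j(j+1),1n}+\Aa_{j(j+1),1n}\bigr)\bigl(\Aa_{j(j+1),j(j+1)}-\Aa_{1n,1n}\bigr)=0.
\]
If the first factor vanishes then $\Aa_{j(j+1),1n}=-\Ba_{j(j+1),1n}\ne 0$ and the $\Aa$ case finishes the job; otherwise $\Aa_{j(j+1),j(j+1)}=\Aa_{1n,1n}$, and $[\Ab,\Ba]_{j(j+1),1n}=0$ for arbitrary $\be$ reduces to $\Ba_{j(j+1),1n}(\Ab_{j(j+1),j(j+1)}-\Ab_{1n,1n})=0$, yielding the claimed equality.

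The step I expect to be the main obstacle is the $\Aa$ case, because $(*)$ with $\be=\al$ is automatically trivial, so the commutator alone cannot produce the needed equality of diagonal entries. The resolution is the $G_1$ elimination argument: any failure of the diagonal condition lets a single, explicitly computable basis-change parameter $r$ kill the off-diagonal entry for every $\al$ at once, which is incompatible with having a surviving entry in canonical form. A secondary subtlety appears at the positions $(j(j+1),1n)$ with $2\le j\le n-2$, where both $\Aa_{j(j+1),1n}$ and $\Ba_{j(j+1),1n}$ are independently allowed; the case split in the $\Ba$ argument routes each branch back to either the $\Aa$ case or a direct commutator conclusion.
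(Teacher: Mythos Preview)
Your proposal is correct and tracks the paper's own argument closely. The paper defers the $\Aa$ case to \cite{tw} (saying only that it ``follows from \eqref{Acommute2}''), and your explicit $G_1$-elimination argument is exactly how that citation is cashed out: the commutator identity $(*)$ alone is symmetric in $\al,\be$ and cannot force $\Ab_{ik,ik}=\Ab_{ab,ab}$, so one needs the canonical-form interpretation together with the observation that a single choice of $(G_1)_{ik,ab}$ kills the $(ik,ab)$ entry for every $\ga$ simultaneously. For the $\Ba$ case your route and the paper's are essentially the same; the only difference is that the paper goes straight to $[\Ab,\Ba]_{j(j+1),1n}=0$ and uses the already-proven $\Aa$ case to conclude that the term $\Ab_{j(j+1),1n}(\Aa_{j(j+1),j(j+1)}-\Aa_{1n,1n})$ vanishes (either factor is zero), whereas you first pass through $[\Aa,\Ba]_{j(j+1),1n}=0$ to set up a two-branch split. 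Your detour is harmless but unnecessary: you can drop the preliminary $[\Aa,\Ba]$ computation and argue directly from $[\Ab,\Ba]_{j(j+1),1n}=0$, invoking the $\Aa$ case to dispose of the cross term.
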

\begin{proof}
For $\Aa$, the proof of Lemma \ref{lemma4} follows from \eqref{Acommute2}, as shown in \cite{tw}. Similarly for $\Ba$, the proof of Lemma \ref{lemma4} follows from considering \eqref{ABcommute2} componentwise. Namely, we find that $0=[\Ab,\Ba]_{12,1n}=\Ba_{12,1n}(\Ab_{1n,1n}-\Ab_{12,12})$, 
and similar identities for $\Ba_{12,2n}$ and $\Ba_{(n-1)n,1(n-1)}$.
The commutation relations for $[\Aa,\Bb]_{j(j+1),1n}$ gives that $\Ba_{j(j+1),1n}(\Ab_{1n,1n}-\Ab_{j(j+1),j(j+1)})=-\Ab_{j(j+1),1n}(\Aa_{1n,1n}-\Aa_{j(j+1),j(j+1)})=0$.
\end{proof}

Now consider a second transformation $G_2$ given by $N \longrightarrow G_2 N$, where $G_2$ is the diagonal matrix $(G_2)_{ik,ik} = g_{ik}$ and $g_{ik}=\prod\limits_{j=i}^{k-1} g_{j(j+1)}$.  The matrices $\Aa$ and $\Ba$ are transformed by conjugation by $G_2$.  Thus $\Aa_{ik,ab} \longrightarrow \dfrac{g_{ik}}{g_{ab}} \Aa_{ik,ab}$ and $\Ba_{ik,ab} \longrightarrow \dfrac{g_{ik}}{g_{ab}} \Ba_{ik,ab}$.  This transformation can be used to scale up to $n-1$ nonzero off-diagonal elements to 1.  For Lie algebras over the field $F=\R$ it may be that some entries have to be scaled to $-1$.

Since the only non-Lie cases occur when $\Ab_{1n,1n}=0$ for all $\be$, then $\Ba_{ik,1n} \ne -\Aa_{ik,1n}$ implies $\Ab_{ik,ik}=0$ for all $\be$ by Lemma \ref{lemma4}.  Since the extensions $\Xa$ are required to be nilindependent, this imposes restrictions on the degree $f$ of non-Lie extensions of $T(n)$.  In particular, this implies that in the maximal case $f=n-1$, $L(n,n-1)$ must be Lie.  Such algebras have been classified in \cite{tw}, and in fact there is a unique algebra $L(n,n-1)$ where all $\Aa$ are diagonal and the $\Xa$ commute.  

\begin{theorem}
Every solvable Leibniz algebra $L(n,f)$ with triangular nilradical $T(n)$ has dimension $d=\frac{1}{2}n(n-1) + f$ with $1 \leq f \leq n-1$.  It can be written in a basis $\{\Xa, N_{ik}\}$ with $\al= 1, \ldots, f$, $1 \leq i < k \leq n$ satisfying
$$[N_{ik},N_{ab}]=\de_{ka}N_{ib} - \de_{bi}N_{ak}$$
$$[\Xa,N_{ik}]=\Aa_{ik,pq}N_{pq}$$
$$[N_{ik},\Xa]=\Ba_{ik,pq}N_{pq}$$
$$[\Xa,\Xb]=\si^{\al \be} N_{1n}.$$

Furthermore, the matrices $\Aa$ and $\Ba$ and the constants $\si^{\al \be}$ satisfy:
\begin{enumerate}
\item[i.] The matrices $\Aa$ are linearly nilindependent and $\Aa$ and $\Ba$ have the form specified in Lemma \ref{Astructure}.  $\Aa$ commutes with all these matrices, i.e. $[\Aa,\Ab]=[\Aa,\Bb]=0$.
\item[ii.] $\Ba_{ik,ab}=-\Aa_{ik,ab}$ for $ab \neq 1n$, and $\Ba_{1n,1n}=-\Aa_{1n,1n}$.
\item[iii.] $L$ is Lie and all $\si^{\al \be}=0$, unless $\Ac_{1n,1n}=0$ for $\ga=1, \ldots, f$. 
\item[iv.] The remaining off-diagonal elements $\Aa_{ik,ab}$ and $\Ba_{ik,ab}$ are zero, unless $\Ab_{ik,ik}=\Ab_{ab,ab}$ for $\be=1, \ldots, f$.
\item[v.] In the maximal case $f=n-1$, there is only one algebra, which is isomorphic to the Lie algebra with all $\Aa$ diagonal where all $\Xa$ commute.
\end{enumerate}
\end{theorem}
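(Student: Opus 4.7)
The plan is to assemble the theorem from the structural results established throughout the section, since each of the five conclusions is essentially a distillation of an earlier lemma or displayed equation. I would begin by noting that the dimension formula is immediate from the construction: $L(n,f)$ is built by appending $f$ linearly nilindependent elements $\Xa$ to a basis of $T(n)$, so $\dim L = \tfrac{1}{2}n(n-1)+f$. The upper bound $f \le n-1$ is the content of the lemma on the maximum degree of extensions, proved using the fact that $\Aa$ has at most $n-1$ free diagonal parameters $\Aa_{i(i+1),i(i+1)}$. The bracket relations on the stated basis come from three sources: the defining relations \eqref{tri} of $T(n)$; the classical fact that $[L,L]\subseteq \nr(L)$ for solvable $L$, which forces $[\Xa,N_{ik}]$ and $[N_{ik},\Xa]$ to lie in $T(n)$ and so to take the form $\Aa_{ik,pq}N_{pq}$ and $\Ba_{ik,pq}N_{pq}$; and Lemma \ref{commutelem}, which puts $[\Xa,\Xb]$ into the canonical form $\si^{\al\be}N_{1n}$.

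Next I would verify each of the five properties by citation. For (i), nilindependence of the $\Aa$ holds by construction, the structural form of $\Aa$ and $\Ba$ is Lemma \ref{Astructure}, and $[\Aa,\Ab]=0$, $[\Aa,\Bb]=0$ are \eqref{Acommute2} and \eqref{ABcommute2}. Property (ii) combines \eqref{2.14cor}, which gives $\Ba_{ik,ab}=-\Aa_{ik,ab}$ whenever $ab\ne 1n$, with the observation immediately following Lemma \ref{Astructure} that property iii.\ together with \eqref{2.14cor} yields $\Ba_{1n,1n}=-\Aa_{1n,1n}$. Property (iii) is the case split of Lemma \ref{commutelem}, and property (iv) is Lemma \ref{lemma4}.

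The only part requiring an argument beyond citation is (v). Here I would argue as follows. If $f=n-1$, the matrices $A^1,\ldots,A^{n-1}$ are linearly nilindependent. Since every $\Aa$ is upper triangular, a nonzero linear combination is nilpotent precisely when its diagonal vanishes; so nilindependence means the $n-1$ tuples $(\Aa_{12,12},\ldots,\Aa_{(n-1)n,(n-1)n})\in F^{n-1}$, for $\al=1,\ldots,n-1$, are linearly independent and hence span $F^{n-1}$. In particular they cannot all lie in the hyperplane $\sum_{j=1}^{n-1}x_j=0$, so some $\Ac_{1n,1n}=\sum_j \Ac_{j(j+1),j(j+1)}\neq 0$, and the general-$n$ analog of Lemma \ref{megalem} invoked in the discussion preceding the theorem then forces $L$ to be Lie with all $\si^{\al\be}=0$. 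Moreover, any nonzero off-diagonal entry in some $\Aa$ or $\Ba$ would require, via Lemma \ref{lemma4}, an equality $\Ab_{ik,ik}=\Ab_{ab,ab}$ holding for \emph{all} $\be$, which is a nontrivial linear constraint on the diagonal tuples and thus incompatible with their spanning $F^{n-1}$. The unique resulting diagonal Lie algebra with commuting $\Xa$ is then exactly the one listed in the classification of \cite{tw}.

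The main obstacle is the bookkeeping in (v): checking that nilindependence at the maximal degree $f=n-1$ simultaneously rules out both off-diagonal entries (via Lemma \ref{lemma4}) and the non-Lie regime (via the general-$n$ analog of Lemma \ref{megalem}), after which uniqueness reduces cleanly to citing Tremblay--Winternitz.
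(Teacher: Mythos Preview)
Your proposal is correct and follows essentially the same approach as the paper: the theorem is a compilation of the section's lemmas and displayed equations, and you cite them in the right places. Your treatment of (v) is in fact more explicit than the paper's---the paper simply asserts that non-Lie cases force $\Ab_{1n,1n}=0$ for all $\beta$ and hence cannot support $n-1$ nilindependent extensions, then defers uniqueness to \cite{tw}; your spanning argument in $F^{n-1}$ makes the same point cleanly and also dispatches the off-diagonal entries via Lemma~\ref{lemma4}.
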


\vspace{12pt}

\noindent{\bf Acknowledgements.}  

The authors gratefully acknowledge the support of the Departments of Mathematics at Spring Hill College, the University of Texas at Tyler, and West Virginia University: Institute of Technology.

\newpage

\begin{table}[b]
\tiny
\caption{The Leibniz algebras $L(4,1)$ of non-Lie type}
\begin{tabular}{lllc}
\hline
No.	&	$A$	&	$B$		&	$\si$, parameters	\\
\hline\hline
(1)
&	$
\left(\begin{array}{cccccc}
	\ 1	&		&	&&&	\\
		&	a	&	&&& 	\\
		&		&	-1-a &&&\\
		& 	& & 1+a && \\
		& 	& & &-1& \\
		& 	& & && 0\\
\end{array}\right)$	&	$B=-A$	&	
	$\begin{array}{l}\si^{11} \neq 0 \in F\\ a\ne-1 \in F\end{array}$	\\
\hline
(2)	
&	$
\left(\begin{array}{cccccc}
	\ 1	&		&	&&&	\\
		&	0	&	\ &&&	\\
		&		&	-1\ 	&&&\\
		& 	& & 1 && \\
		& 	& &&-1& \\
		& 	& & && 0\\
\end{array}\right)$	&
$
\left(\begin{array}{cccccc}
	\ -1	&		&	&&&	\\
		&	0	&	\ &&&	1\\
		&		&	1\ 	&&&\\
		& 	& & -1 && \\
		& 	& & &1& \\
		& 	& & && 0\\
\end{array}\right)$	&
$\si^{11}\in F$		\\
\hline
(3)	
&	$
\left(\begin{array}{cccccc}
	\ 1	&		&	&&&	\\
		&	-1	&	 &&&	\\
		&		&	0 	&&&\\
		& 	& & 0 && \\
		& 	& &&-1& \\
		& 	& & && 0\\
\end{array}\right)$	&	
$
\left(\begin{array}{cccccc}
	\ -1	&		&	&&&	\\
		&		1&	&&&	\\
		&		&	0 	&&&1\\
		& 	& &  0&& \\
		& 	& & &1& \\
		& 	& & && 0\\
\end{array}\right)$	&	
$\si^{11} \in F$		\\
\hline
(4)	&
$
\left(\begin{array}{cccccc}
	0	&		&		&        	&		&	\\
		&	1	&		& 		&    		&	\\
		&		&	-1 	&   		&		&	\\
		& 		&		&     1	&    	 	&	 \\
		& 		&          	&		& 0		&	 \\
		& 	   	& 		&	 	&		& 0	\\
\end{array}\right)$	&	
$
\left(\begin{array}{cccccc}
	0	&		&		&        	&		&1	\\
		&	-1	&		& 		&    		&	\\
		&		&	1 	&   		&		&	\\
		& 		&		&     -1 	&    	 	&	 \\
		& 		&          	&		&  0		&	 \\
		& 	   	& 		&	 	&		& 0	\\
\end{array}\right)$	&	$\si^{11} \in F$		\\
\hline
(5)	&
$
\left(\begin{array}{cccccc}
	0	&		&		&        	&		&	\\
		&	1	&		& 		&    		&	\\
		&		&	-1 	&   		&		&	\\
		& 		&		&     1 	&    	 	&	 \\
		& 		&          	&		&0		&	 \\
		& 	   	& 		&	 	&		& 0	\\
\end{array}\right)$	&	$B = -A$	&	$\si^{11} \neq 0 \in F$		\\
\hline
(6)	&	
$
\left(\begin{array}{cccccc}
	0	&		&		&        	&	1	&	\\
		&	1	&		& 		&    		&	\\
		&		&	-1 	&   		&		&	\\
		& 		&		&     1 	&    	 	&	 \\
		& 		&          	&		&     0	&	 \\
		& 	   	& 		&	 	&		& 0	\\
\end{array}\right)$	&	$B = -A$	&	$\si^{11} \neq 0 \in F$		\\
\hline
(7)	&
$
\left(\begin{array}{cccccc}
	0	&		&		&        	&	1	&	\\
		&	1	&		& 		&    		&	\\
		&		&	-1 	&   		&		&	\\
		& 		&		&     1 	&    	 	&	 \\
		& 		&          	&		&0		&	 \\
		& 	   	& 		&	 	&		& 0	\\
\end{array}\right)$	&
$
\left(\begin{array}{cccccc}
	0	&		&		&        	&	-1	& 1	\\
		&	-1	&		& 		&    		&	\\
		&		&	1 	&   		&		&	\\
		& 		&		&     -1 	&    	 	&	 \\
		& 		&          	&		& 0		&	 \\
		& 	   	& 		&	 	&		& 0	\\
\end{array}\right)$	&	$\si^{11} \in F$		\\
\hline
(8) &	
$
\left(\begin{array}{cccccc}
	1	&		&		&        	&		&	\\
		&	0	&		& 		&    		& 1	\\
		&		&	-1 	&   		&		&	\\
		& 		&		&     1 	&    	 	&	 \\
		& 		&          	&		&    -1	&	 \\
		& 	   	& 		&	 	&		&   0	\\
\end{array}\right)$	&	
$
\left(\begin{array}{cccccc}
	-1	&		&		&        	&		&  	\\
		&	0	&		& 		&    		& b	\\
		&		&	1 	&   		&		&	\\
		& 		&		&     -1 	&    	 	&	 \\
		& 		&          	&		&  1		&	 \\
		& 	   	& 		&	 	&		& 0	\\
\end{array}\right)$	&	
$\begin{array}{c}
\si^{11}, b \in F\\
\\
\si^{11}, b+1 \text{ not both zero}
\end{array}
$	\\
\hline
(9)	&
$
\left(\begin{array}{cccccc}
	1	&		&		&        	&		&	\\
		&	-1	&		& 		&    		& 	\\
		&		&	0	&   	1	&		&	\\
		& 		&		&     0 	&    	 	&	 \\
		& 		&          	&		&    -1	&	 \\
		& 	   	& 		&	 	&		&   0	\\
\end{array}\right)$	&	$B= -A$	&	$\si^{11} \neq 0 \in F$		\\
\hline
(10)	&
$
\left(\begin{array}{cccccc}
	1	&		&		&        	&		&	\\
		&	-1	&		& 		&    		& 	\\
		&		&	0	&   	1	&		&	\\
		& 		&		&     0 	&    	 	&	 \\
		& 		&          	&		&    -1	&	 \\
		& 	   	& 		&	 	&		&   0	\\
\end{array}\right)$	&
$
\left(\begin{array}{cccccc}
	-1	&		&		&        	&		&	\\
		&	1	&		& 		&    		& 	\\
		&		&	0	&   	-1	&		& 1	\\
		& 		&		&     0 	&    	 	&	 \\
		& 		&          	&		&    1		&	 \\
		& 	   	& 		&	 	&		&   0	\\
\end{array}\right)$	&	$\si^{11} \in F$		\\
\hline\hline
\end{tabular}

\label{L41}
\end{table}

\begin{table}[th]
\tiny
\caption{The Leibniz algebras $L(4,2)$ of non-Lie type}
\begin{tabular}{lllc}  
\hline
No. &  
$A^1=-B^1$ & $A^2=-B^2$ & $\si$ \\
\hline\hline
(11) &  
$\left(\begin{array}{cccccc}
	1	&		&		&   &		&	  \\
		&	0	&		& 	&   & 	\\
		&		&	-1&   &		&	  \\
		& 	&		& 1 &   &	  \\
		& 	&  	&		& -1&	  \\
		&  	& 	&	 	&		& 0	\\
\end{array}\right)$	
&
$\left(\begin{array}{cccccc}
	0	&		&		&   &		&	  \\
		&	1	&		& 	&   & 	\\
		&		&	-1&   &		& 	\\
		& 	&		& 1	&   &	  \\
		& 	&   &		& 0	&	  \\
		& 	& 	&	 	&		& 0	\\
\end{array}\right)$
&
$\begin{array}{c}
\si^{11}, \si^{22}, \si^{12}, \si^{21} \in F\\
\\
\si^{11}, \si^{22}, \si^{12}+\si^{21}\text{ not all zero}
\end{array}
$ \\
\hline\hline
\end{tabular}
\label{L42}
\end{table}

\newpage

\end{document}